\newtheorem{theorem}{Theorem}[section]
\newtheorem{lemma}[theorem]{Lemma}
\theoremstyle{definition}
  \newtheorem{example}[theorem]{Example}
\newcommand{\dG}{\mathbb{G}}
\newcommand{\dH}{\mathbb{H}}
\newcommand{\dP}{\mathbb{P}}
\newcommand{\calL}{\mathcal{L}}
\newcommand{\calM}{\mathcal{M}}
\newcommand{\calR}{\mathcal{R}}
\newcommand{\calG}{\mathcal{G}}
\newcommand{\Z}{\mathbb{Z}}
\newcommand{\N}{\mathbb{N}}
\newcommand{\id}{\mathds{1}}
\begin{document}

\date{}

   \setcounter{Maxaffil}{3}
  
    \title{
Insertion-tolerance and repetitiveness of random graphs
}
    
    \renewcommand\footnotemark{}
    
    \author[a,b]{Fernando Alcalde Cuesta}
 
    \author[a,c,d]{\'Alvaro Lozano Rojo} 

     \author[a,b]{Ant\'on C. V\'azquez Mart\'{\i}nez}

     \affil[a]{\small ~GeoDynApp - ECSING Group (Spain)}
      \affil[b]{\small ~Departamento de Matem\'aticas, Universidad de Santiago de Compostela, 
      E-15782 Santiago de Compostela (Spain)}
    
      \affil[c]{\small ~Centro Universitario de la Defensa, 
        Academia General Militar, Ctra. Huesca s/n. \newline E-50090 Zaragoza (Spain)
      }
     \affil[d]{\small ~Instituto Universitario de Matem\'aticas y Aplicaciones, 
        Universidad de Zaragoza (Spain)
      }

\maketitle

\vspace{-2em}

\begin{abstract}

\noindent
 Bond percolation on Cayley graphs provides examples of random graphs. Other 
  examples arise from the dynamical study of proper repetitive subgraphs of 
  Cayley graphs. In this paper we demonstrate that these two families have 
  mutually singular laws as a corollary of a general lemma about countable 
  Borel equivalence relations on first countable Hausdorff spaces. 
\medskip 

{\footnotesize
\noindent
\textbf{Keywords}: Borel equivalence relations, probability measures, random graphs.

\noindent
\textbf{AMS MSC 2010}: 05C80, 37A20, 60B05, 60C05.
}
 \end{abstract}

\section{Introduction}

The notion of \emph{random rooted graph} (as described by D.~Aldous and 
R.~Lyons in~\cite{AL}) arises in substance from the paper \cite{LPP} of 
R.~Lyons, R.~Pemantle, and Y.~Peres on Galton-Watson trees. Bernoulli bond 
percolation on a Cayley graph $\dG$ provides the basic example of random rooted 
graph, which is obtained by keeping each edge with constant probability $p$ 
independently to other edges, see Figure~\ref{fig:cluster}. This kind of random 
graphs (and other obtained by bond percolation on Cayley graphs or unimodular 
transitive graphs) enjoy the important property of \emph{insertion-tolerance} 
introduced by R. Lyons and O. Schramm in Definition 3.2 of \cite{LS}: the 
measure of a nonnull Borel set after adding an edge is nonnull.
\medskip
\begin{figure}
  \centering
\subfigure[Insertion-tolerant random graph]{
  \includegraphics[height=2.3in]{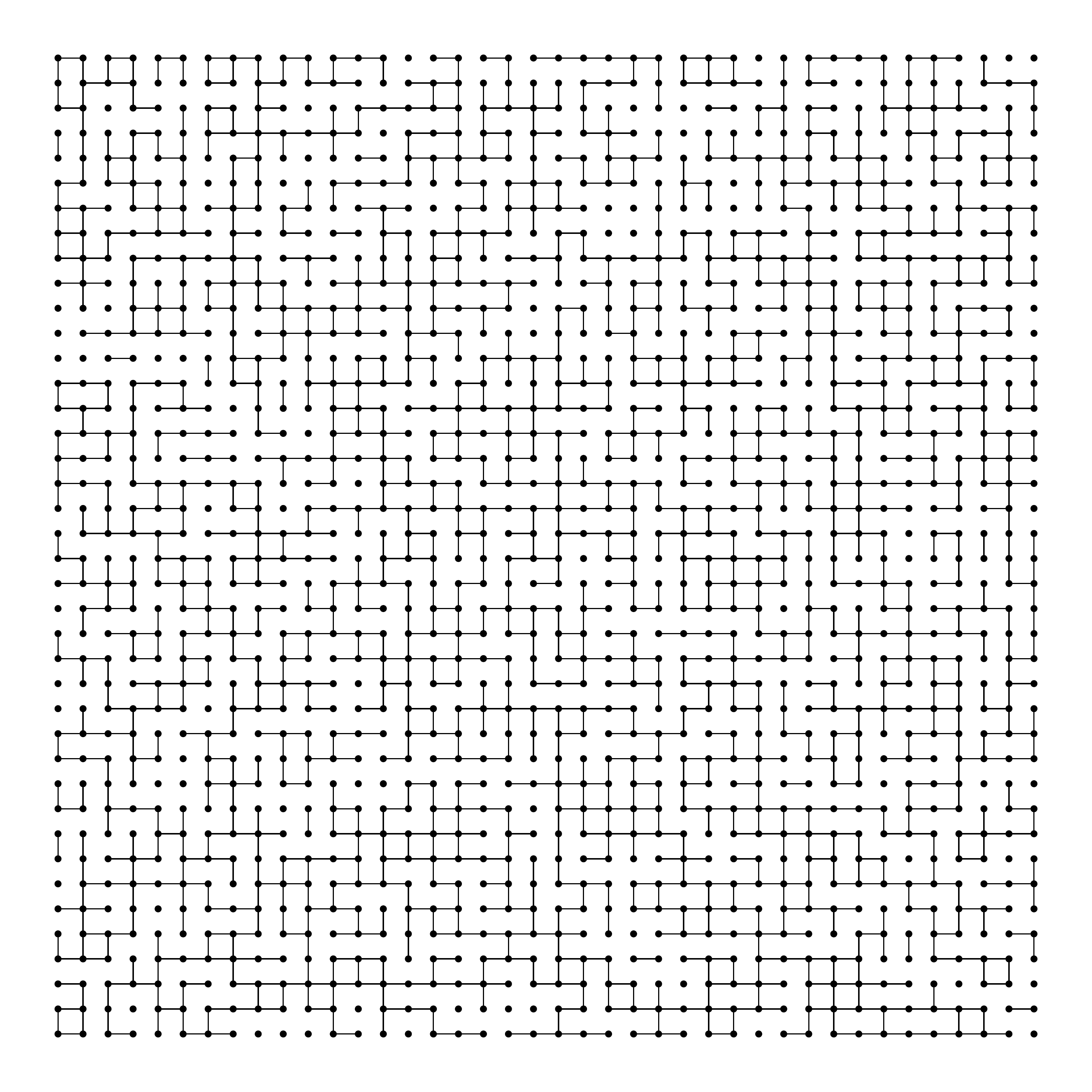}
    \label{fig:cluster}
  }
\subfigure[Repetitive random graph]{
  \includegraphics[height=2.2in]{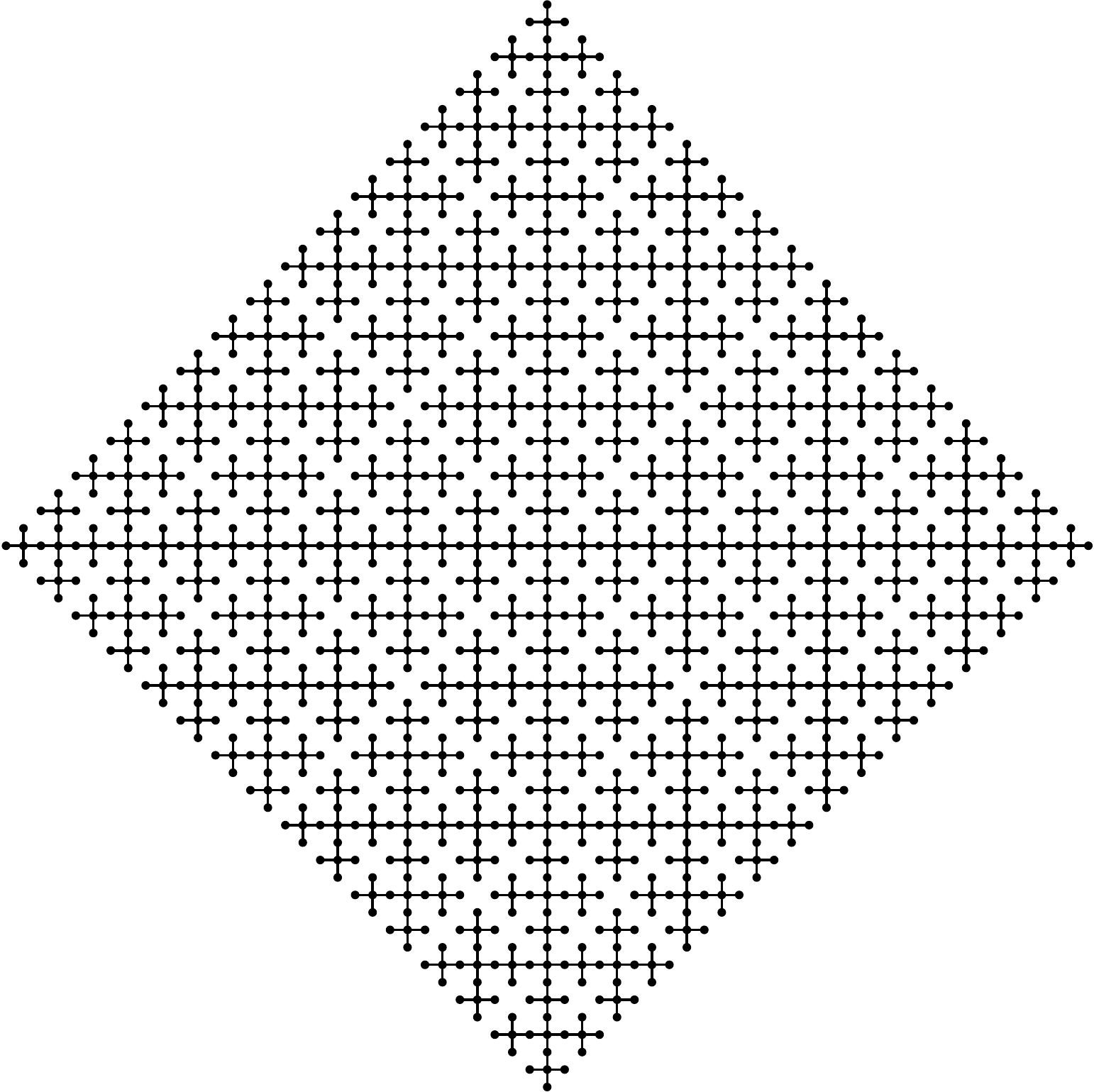}
  \label{fig:Kenyontree}
  }
  \caption{Random subgraphs of the Cayley graph of $\Z^2$.}
\end{figure}

However, this property does not hold for other examples of random graphs 
arising as orbit closures of repetitive subgraphs of $\dG$ in the 
Gromov-Hausdorff space (described in \cite{Gh}; see also \cite{ALM} and 
\cite{B}), see Figure~\ref{fig:Kenyontree}. The \emph{repetitiveness} of a 
subgraph of $\dG$ is equivalent to the minimality of its orbit closure. 
\medskip 

In this paper, we show that the two properties above cannot occur 
simultaneously. More precisely, any random subgraph of a Cayley graph $\dG$ 
with  insertion-tolerant, ergodic in restriction to the set of infinite states 
and quasi-invariant law and the orbit closure of any proper repetitive subgraph 
of $\dG$ are mutually singular. 

\section{A lemma for Borel equivalence relations}
\label{SBorel}

Let $X$ be a Borel space and let $\calR \subset X \times X$ be a countable 
Borel equivalence relation. For each $x \in X$, we define 
$\calR [x] = \{ \,y \in X \mid (x,y) \in \calR \,\}$,  and similarly for each 
Borel set $B \subset X$, 
\[
  \calR [B] = \bigcup_{x \in B} \calR [x].
\]
A Borel probability measure $\mu$ on $X$ is \emph{$\calR$-invariant} if 
$\varphi_\ast (\mu|_A) = \mu|_B$ for any partial transformation 
$\varphi:A\to B$ of $\calR$ (i.e. $\varphi$ is a measurable bijection whose 
graph is contained in $\calR$), where $\mu|_A$ and $\mu|_B$ are the measures 
restricted to $A$ and $B$ respectively. If only $\mu$-null sets are preserved, 
$\mu$ is \emph{$\calR$-quasi-invariant} and $\calR$ is 
\emph{$\mu$-nonsingular}. The measure $\mu$ is \emph{$\calR$-ergodic} if either 
$\mu(\calR [B]) = 0$ or $\mu(\calR [B]) = 1$ for every Borel set $B \subset X$. 
\medskip 

If $X$ is also equipped with a topology, a closed subset $Z \subset X$ is 
\emph{$\calR$-minimal} if every class $\calR[x]$ is dense in $Z$. Under some 
additional assumptions, we have the following general lemma: 

\begin{lemma}
  \label{keylemma}
  Let $X$ be a first countable Hausdorff space and let $\calL\subset X\times X$ 
  be a countable Borel equivalence relation. Let $\calR$ be a nonsingular 
  equivalence subrelation of $\calL$ equipped with an ergodic quasi-invariant 
  probability measure $\mu$. Suppose that there exists a point $x$ in the 
  support of $\mu$ such that $\calL[x] = \{x\}$. If $Z$ is a closed 
  $\calL$-minimal subset of $X$, then either $Z = \{x\}$ or $\mu(Z) = 0$. 
\end{lemma}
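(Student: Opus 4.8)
The plan is to argue by cases according to whether the distinguished point $x$ lies in $Z$, using that the $\calL$-class of $x$ is as small as possible on the topological side and that $\mu$ is $\calR$-ergodic on the measure-theoretic side.

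First I would dispose of the case $x \in Z$. Since $Z$ is $\calL$-minimal, the class $\calL[x]$ is dense in $Z$; but by hypothesis $\calL[x] = \{x\}$, so $\{x\}$ is dense in $Z$, i.e. $Z \subseteq \overline{\{x\}}$. Because $X$ is Hausdorff, hence $T_1$, we have $\overline{\{x\}} = \{x\}$, and together with $x \in Z$ this forces $Z = \{x\}$. This is the first alternative in the conclusion.

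Now suppose $x \notin Z$; I claim $\mu(Z) = 0$. The key observation is that a topologically minimal set is closed and invariant, so $Z$ is $\calL$-invariant, and since $\calR \subseteq \calL$ it is therefore $\calR$-invariant, that is $\calR[Z] = Z$. As $Z$ is closed it is Borel, so $\calR$-ergodicity of $\mu$ applies and gives $\mu(Z) = \mu(\calR[Z]) \in \{0,1\}$. If $\mu(Z) = 1$, then $X \setminus Z$ is an open set of $\mu$-measure $0$ that contains $x$, contradicting the assumption that $x$ lies in the support of $\mu$. Hence $\mu(Z) = 0$, which is the second alternative, and the two cases together prove the lemma.

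I expect the only real content to be the reduction to ergodicity via the invariance of $Z$: once one records that a minimal set is closed and $\calL$-invariant, hence $\calR$-invariant, everything else is the one-line application of $\calR$-ergodicity plus the elementary facts that singletons are closed (Hausdorffness) and that open neighbourhoods of a support point have positive measure. If one prefers not to build invariance into the definition of ``$\calL$-minimal'', the argument becomes more delicate and this is where first countability would enter: one passes to the Borel $\calR$-saturation $\calR[Z]$ (a countable Borel equivalence relation carries Borel sets to Borel sets), uses ergodicity to get $\mu(\calR[Z]) = 1$, and then extracts, via a countable neighbourhood basis at $x$, a sequence in $\calR[Z]$ converging to $x$ in order to locate $x$ relative to $Z$ and reach a contradiction.
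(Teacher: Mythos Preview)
Your argument is correct, and in fact more economical than the paper's. The paper does not split into cases on whether $x\in Z$; instead it uses first countability to pick a decreasing neighbourhood basis $(U_n)$ at $x$ with $\bigcap_n U_n=\{x\}$ and $\mu(U_n)>0$, applies quasi-invariance and ergodicity to conclude that $Y=\bigcap_n\calR[U_n]$ is conull, and then shows that any $z\in Z\cap Y$ yields (via $\calL$-invariance of $Z$) a sequence $z_n\in Z\cap U_n$ converging to $x$, forcing $x\in Z$ and hence $Z=\{x\}$ by minimality. Your route bypasses the sequence entirely: once one grants that an $\calL$-minimal set is $\calL$-invariant (which the paper also uses, implicitly, to place the points $z_n$ inside $Z$), the closed set $Z$ is $\calR$-invariant and Borel, ergodicity gives $\mu(Z)\in\{0,1\}$, and the support hypothesis rules out $\mu(Z)=1$ when $x\notin Z$. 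The payoff is that your version needs only $T_1$ rather than first countability and never invokes quasi-invariance; the paper's approach, on the other hand, is closer in spirit to the compact generalization mentioned just after the lemma, where one genuinely needs to produce convergent sequences into an $\calL$-minimal set $\calM$ that is not a singleton. Your final paragraph correctly anticipates this: the sequential argument you sketch there is essentially the paper's proof.
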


\begin{proof} 
  Since $X$ is a first countable Hausdorff space and $x$ belongs to the support 
  of $\mu$, there is a decreasing sequence of open neighborhoods $U_n$ of $x$ 
  such that
  \[
    \bigcap_{n \in \N} U_n = \{x\}
      \quad \text{and} \quad 
    \text{$\mu(U_n)>0$ for all $n \in \N$.} 
  \]
  As $\mu$ is $\calR$-quasi-invariant and $\calR$-ergodic, $\mu(\calR[U_n])=1$ 
  for all $n \in \N$ and hence
  \[
    Y = \bigcap_{n \in \N}\calR[ U_n]
  \]
  is also a conull Borel set. Let $Z$ be a closed $\calL$-minimal subset of $X$ 
  and assume that there is a point $z \in Z \cap Y$. Therefore, since 
  $\calR \subset \calL$, the point 
  \[
    z \in Z \cap \calR[ U_n] \subset Z \cap \calL[U_n]
  \] 
  for all $n\in\N$. Thus, for each $n\in\N$, we can find a point 
  $z_n\in Z\cap U_n$. The sequence of points $z_n \in Z$ converges to $x$, but 
  as $Z$ is closed, it follows that $x \in Z$. By the $\calL$-minimality of 
  $Z$, we conclude that $Z =\{x\}$. Finally, if $Z \cap Y = \emptyset$, then 
  $\mu(Z) = 0$ since $Y$ is conull. 
\end{proof}

Actually, assuming $X$ is compact, we can replace $\{x\}$ by any 
$\calL$-minimal set $\calM$ contained in the support of $\mu$. If there is a 
point $z \in Z \cap Y$,  we still obtain a sequence of points 
$z_n\in X\cap U_n$. By compactness, passing to a subsequence if necessary, we 
can assume that $z_n$ converges to point $x \in \calM$. But as before, since 
$Z$ is closed, the limit point $x \in Z$ and hence $Z = \calM$ by minimality.

\section{Random subgraphs of Cayley graphs}
\label{Srandomsubgraphs}

Let $G$ be a countable group $G$ with a finite and symmetric generating set 
$S\subset G$. The \emph{Cayley graph} $\dG = (V,E)$ associated to $(G,S)$ is 
defined by $V = G$ and $E = \{\,(g,h)\in G\times G\mid hg^{-1}\in S\,\}$. It 
has a natural right $G$-action by graph automorphisms that combines the natural 
right action of $G$ on itself and the right diagonal action of $G$ on 
$E\subset G\times G$. Let $2^E$ the power set of all subsets of $E$, equipped 
with the product topology. This is a compact metrizable space, so in particular 
first countable and Hausdorff. It also have a natural left $G$-action by 
homeomorphisms, which is induced by the natural right $G$-action on $E$, namely 
\[
  g.\omega = \omega g^{-1}
\]
for all $g \in G$ and all $\omega \in 2^E$. Note that $E$ is a fixed point for 
this action. Given any finite and symmetric subset $F \subset E$, we consider 
the open set 
\[
  U_F = \{\,\omega \in 2^E \mid F \subset \omega\,\}.
\]
and thus we have that
\[
  \{E\} = \bigcap_{n \in \N} U_{F_n}
\]
for any increasing exhaustion $\{F_n\}$ of $E$ by finite and symmetric subsets.
\medskip 

For each $g \in G$ and each $\omega \in 2^E$, we denote by $C_g(\omega)$ the 
connected component or \emph{cluster} of the graph $(G,\omega)$ that contains 
$g$. If we take $g$ equal to the identity $\id$, the map associating to each 
element $\omega \in 2^E$ the cluster $C_\id(\omega) \subset \dG$ is a 
continuous map. It takes values in the \emph{Gromov-Hausdorff space} $\calG$ 
formed of all connected subgraphs $\dH$ of $\dG$ containing $\id$ (see 
\cite{ALM} and \cite{Gh}). In fact, this space is endowed with the ultrametric
\[
  d(\dH,\dH') = 1/\exp(\sup\{\,r \geq 0 \mid B_{\dH}(\id,r) 
              = B_{\dH'}(\id,r)\,\}), 
\]
where $B_{\dH}(\id,r)$ is the combinatorial closed ball of radius $r$ centered 
at $\id$. 
\medskip 

The \emph{orbit equivalence relation} $\calR^G$ on $2^E$ is defined by 
\[
  \calR^G = \{\,(\omega,\omega') \in 2^E\times 2^E 
        \mid\exists\,g\in G:\omega'=g.\omega \,\}, 
\]
and the \emph{cluster equivalence relation}s $\calR$ on $2^E$ by 
\[
  \calR = \{\,(\omega,\omega') \in 2^E \times 2^E \mid 
                \exists \, g \in C_\id(\omega) : \omega' = g.\omega \,\}. 
\]
On the quotient $\calG$, we have another natural equivalence relation, abusively denoted by $\calR$, which is defined by 
\[
  \calR = \{\,(\dH,\dH') \in \calG \times \calG 
              \mid \exists \, g \in G : \dH' = g.\dH =\dH'g^{-1} \,\}
\]
and induced by the equivalence relation $\calR$ on $2^E$. These two compatible 
actions are the orbital equivalence relations defined by the left actions of the 
pseudogroups generated by the local transformations $\omega \mapsto g.\omega$ 
and $\dH \mapsto g.\dH$ defined on the open subset of $2^E$ of all sets 
$\omega$ such that $g\in C_\id(\omega)$ and the open subset of $\calG$ of all 
subgraphs $\dH$ containing the vertex $g$ respectively. 
\medskip 

Let $\mu$ be a probability measure on $X=2^E$ that makes the cluster 
equivalence relation $\calR$ nonsingular. Assume that:
\begin{itemize}

  \item[(i)] The $\calR$-invariant set 
    $X_\infty = \{\,\omega \in 2^E \mid card(\calR[\omega]) = \infty \,\}$ 
    has positive measure, and the restriction of $\mu$ to this set is 
    $\calR$-ergodic.

  \item[(ii)] The measure $\mu$ is \emph{insertion-tolerant}, i.e. for each finite 
    and symmetric set $F \subset E$, the map $i_F : 2^E \to 2^E$ given by 
    \[
      i_F(\omega) = \omega \cup F
    \]
    preserves all Borel sets of positive $\mu$-measure. 

\end{itemize}
Since $\mu(X_\infty)>0$, we can replace $\mu$ with itself conditioned to 
$X_\infty$, which is still denoted abusively by $\mu$. By our second 
assumption, if $\{F_n\}$ is an increasing exhaustion of $E$, the open sets 
$i_{F_n}(X_\infty) = U_{F_n} \cap X_\infty$
have positive $\mu$-measure for every $n$. It follows that all the open 
neighborhoods $U_{F_n}$ of $E$ have also positive $\mu$-measure and hence $E$ 
belong to the support of $\mu$. Finally, since $E$ is a fixed point of the 
$G$-action on $X$ and $C_\id(E) = \dG$, we conclude that $\calR[E] = \{E\}$. 
Lemma~\ref{keylemma} applied to the first countable Hausdorff space $X = 2^E$ 
equipped with the equivalence relation $\calL = \calR$ and the $G$-fixed point 
$x = E$ yields:

\begin{theorem}
  \label{thm:maintheorem1}
  Let $\mu$ be a probability measure on $X=2^E$. If $\mu$ is quasi-invariant 
  and ergodic with respect to $\calR$ and insertion tolerant, then any closed 
  $\calR$-minimal set $Z \subset X$ is either $Z = \{E\}$ or $\mu(Z) = 0$. \qed
\end{theorem}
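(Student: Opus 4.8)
The plan is to obtain this as an immediate consequence of Lemma~\ref{keylemma}, applied with $\calL=\calR$ (the cluster equivalence relation), with the distinguished subrelation in the lemma also taken to be $\calR$ itself, and with the distinguished point $x=E$. Everything then reduces to checking the hypotheses of the lemma. That $X=2^E$, carrying the product topology, is a first countable Hausdorff space is immediate since it is compact metrizable; nonsingularity, $\calR$-quasi-invariance and $\calR$-ergodicity of $\mu$ are given, so the only substantive things to verify are that $E$ lies in the support of $\mu$ and that $\calR[E]=\{E\}$.

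For the support condition I would argue as in the paragraph preceding the statement. Conditioning $\mu$ to the $\calR$-invariant set $X_\infty$ of infinite-cluster configurations, which has positive measure and on which $\mu$ is $\calR$-ergodic by assumption (i), fix an increasing exhaustion $\{F_n\}$ of $E$ by finite symmetric subsets, so that the open sets $U_{F_n}$ form a decreasing neighborhood basis of $E$ with $\bigcap_{n}U_{F_n}=\{E\}$. Insertion-tolerance (hypothesis (ii)) says that each $i_{F_n}$ carries positive-measure sets to positive-measure sets; since adjoining the finite set $F_n$ only enlarges clusters and forces $F_n\subset i_{F_n}(\omega)$, one checks the identity $i_{F_n}(X_\infty)=U_{F_n}\cap X_\infty$, and hence $\mu(U_{F_n})>0$ for every $n$. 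This is precisely the statement that $E$ is in the support of (the conditioned) $\mu$. The only mildly delicate point here is this set identity, which uses that any $\omega\in U_{F_n}$ already contains $F_n$ and is therefore a fixed point of $i_{F_n}$.

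For the fixed-class condition, recall that the left action is $g.\omega=\omega g^{-1}$ and that $E$, sitting inside $G\times G$ under the diagonal action, is a global fixed point, so $g.E=E$ for all $g\in G$; since the cluster relation $\calR$ only allows translates of $\omega$ by elements $g\in C_\id(\omega)$ and $C_\id(E)=\dG$, we get $\calR[E]=\{E\}$. Lemma~\ref{keylemma} then applies verbatim and yields that any closed $\calR$-minimal set $Z\subset X$ is either $Z=\{E\}$ or $\mu$-null. I do not expect a genuine obstacle: all the real work sits in Lemma~\ref{keylemma}, and the rest is bookkeeping, the one conceptual point being that insertion-tolerance is exactly the ingredient needed to push the $G$-fixed point $E$ into the support of $\mu$, which is the hypothesis the lemma cannot dispense with.
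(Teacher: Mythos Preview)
Your proposal is correct and follows essentially the same route as the paper: you apply Lemma~\ref{keylemma} with $\calL=\calR$ and $x=E$, verifying that $E$ lies in the support of $\mu$ via insertion-tolerance and the identity $i_{F_n}(X_\infty)=U_{F_n}\cap X_\infty$, and that $\calR[E]=\{E\}$ because $E$ is a $G$-fixed point. This is exactly the argument the paper sketches in the paragraph preceding the theorem, and your added remark on why the set identity holds is a welcome clarification.
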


In this result, we can replace the configuration space $X=2^E$ with the 
Gromov-Hausdorff space $\calG$ so that a probability measure $\mu$ makes 
$\calG$ a \emph{random subgraph} of the Cayley graph $\dG$. If $\mu$ is 
quasi-invariant with respect to $\calR$, we say the random graph $(\calG,\mu)$ 
is \emph{nonsingular}. Let $\calG_\bullet$ be the locally compact metrizable 
space of the isomorphism classes of locally finite connected rooted graphs (see 
\cite{BS}). Using the natural continuous map from $\calG$ to $\calG_\bullet$ sending 
each graph $\dH$ on its isomorphism class $[\dH]$, we can interpret $(\calG,\mu)$ as a \emph{random graph} in 
the sense of \cite{AL}. On the other hand, as pointed out in the introduction, 
it is well known (see for example \cite{ALM} and \cite{B}) that the closed 
$\calR$-minimal subsets of $\calG$ are in one-to-one correspondence with the 
orbit closures of repetitive subgraphs of $\dG$. Now, we can state an 
equivalent version of the previous theorem: 

\begin{theorem}
  \label{thm:maintheorem2}
  Any nonsingular random subgraph of a Cayley graph $\dG$ whose law is 
  insertion-tolerant and ergodic in restriction to the set of its infinite 
  states and any orbit closure of a proper repetitive subgraph of $\dG$ are 
  mutually singular. \qed
\end{theorem}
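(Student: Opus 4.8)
The plan is to obtain Theorem~\ref{thm:maintheorem2} as a direct translation of Theorem~\ref{thm:maintheorem1} into the Gromov--Hausdorff picture. First I would record the elementary facts that make the transfer legitimate: the space $\calG$ carries the ultrametric $d$, hence is metrizable and in particular first countable and Hausdorff; the Cayley graph $\dG$, viewed as a connected subgraph of itself rooted at $\id$, is a point of $\calG$; it is fixed by the $G$-action because $\dG g^{-1} = \dG$ for every $g \in G$; and $\calR[\dG] = \{\dG\}$, since every vertex of $\dG$ lies in $C_\id(\dG) = \dG$ and re-rooting $\dG$ at any of its vertices returns $\dG$. Running the paragraph that precedes Theorem~\ref{thm:maintheorem1} with $X = \calG$ and $x = \dG$ --- conditioning the law $\mu$ to the set of infinite states, and using insertion-tolerance to see that every basic neighborhood $\{\,\dH \in \calG \mid F \subset \dH\,\}$ of $\dG$ has positive $\mu$-measure, so that $\dG$ lies in the support of $\mu$ --- Lemma~\ref{keylemma} applied with $\calL = \calR$ yields: every closed $\calR$-minimal set $Z \subset \calG$ is either $\{\dG\}$ or $\mu$-null.

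Next I would invoke the correspondence, standard and recalled in the text (see \cite{ALM} and \cite{B}), between closed $\calR$-minimal subsets of $\calG$ and orbit closures of repetitive subgraphs of $\dG$: the orbit closure $Z = \overline{\calR[\dH_0]}$ of a repetitive subgraph $\dH_0$ is closed and $\calR$-minimal. The point of the hypothesis is that if $\dH_0$ is \emph{proper}, i.e. $\dH_0 \neq \dG$, then $\dH_0 \in Z$ shows $Z \neq \{\dG\}$, so the dichotomy above forces $\mu(Z) = 0$. Here properness is essential: the orbit closure of the (non-proper) repetitive subgraph $\dG$ is the singleton $\{\dG\}$, which is not $\mu$-null, precisely because $\dG$ lies in the support of $\mu$.

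Finally, whatever probability measure $\nu$ is carried by the orbit closure $Z$ --- this being the content of treating the orbit closure itself as a random subgraph --- satisfies $\nu(Z) = 1$, while $\mu(Z) = 0$; taking the Borel (indeed closed) set $A = Z$ then exhibits $\mu \perp \nu$. Pushing forward along the continuous maps $2^E \to \calG \to \calG_\bullet$ transports the conclusion to the random-graph formalism of \cite{AL}, and also shows that the statement does not depend on whether one works on $2^E$ or on $\calG$.

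I expect the only genuine subtlety to be the transfer step of the first paragraph: insertion of a finite symmetric edge set is phrased on $2^E$ rather than on abstract rooted graphs, so to conclude that $\dG$ lies in the support of $\mu$ and that $\calR[\dG] = \{\dG\}$, one should either run the support argument on $2^E$ and push forward under the continuous cluster map $\omega \mapsto C_\id(\omega)$, or check directly that the maps $i_F$ descend to $\calG$. Either way, this is exactly the hypothesis-checking needed before Lemma~\ref{keylemma} applies, and everything else is bookkeeping.
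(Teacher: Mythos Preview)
Your proposal is correct and follows the paper's approach exactly: the paper treats Theorem~\ref{thm:maintheorem2} as an immediate reformulation of Theorem~\ref{thm:maintheorem1} (it carries only a \qed), obtained by replacing $2^E$ with $\calG$, observing that $\dG$ plays the role of the fixed point $E$, and invoking the standard correspondence between closed $\calR$-minimal subsets of $\calG$ and orbit closures of repetitive subgraphs. Your write-up simply fleshes out that explanatory paragraph, and the subtlety you flag about insertion-tolerance being phrased on $2^E$ is precisely the one place where care is needed; the paper handles it implicitly via the cluster map $\omega \mapsto C_\id(\omega)$.
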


\section{Examples}

In this section, we assemble some examples to which Theorems~\ref{thm:maintheorem1}~and~\ref{thm:maintheorem2} apply. 

\begin{example} \label{ex:Bernoulli} 
  \emph{Bernoulli bond percolation} on a Cayley graph $\dG$ provides the basic 
  example of random graph, which is obtained by keeping each edge with constant 
  probability $p$ independently to other edges. More precisely, the power set 
  $2^E$ is equipped with the Bernoulli measure $\mu$ with constant survival 
  parameter $p$ (which means that $\mu$ is the product of the measure on 
  $\{0,1\}$ assigning probabilities $1-p$ and $p$ to $0$ and $1$ respectively). 
  In the supercritical phase $p_c < p <1$, if we denote by $\dP$ the law that 
  governs the clusters $C_\id(\omega)$, the set of infinite clusters has 
  positive $\dP$-measure. According to the cluster indistinguishability theorem 
  proved by R.~Lyons and O.~Schramm in \cite{LS} (see also \cite{GL}), the 
  measure $\dP$ conditioned to the set of infinite clusters is $\calR$-ergodic 
  and insertion-tolerant. Then the orbit closure of any proper repetitive 
  subgraph $\dH$ of $\dG$ is $\dP$-null. 
  \end{example}

\begin{example} \label{ex:percolation} 
Since Theorem 3.3 of \cite{LS} is valid for any insertion-tolerant 
  $\calR$-invariant probability measure $\mu$ on $2^E$, 
  Theorem~\ref{thm:maintheorem2} also applies to random graphs obtained by this 
  kind of $G$-invariant percolation. Recall that a \emph{$G$-invariant bond 
  percolation process} on $\dG$ is given by a measure preserving $G$-action on 
  a standard Borel probability space $(X,\mu)$ together with a $G$-equivariant 
  Borel map $\pi:X\to 2^E$ (see \cite{G}). In the case under consideration, 
  the map $\pi$ is the identity $id$, but the law of the process is not longer 
  the Bernoulli measure. However, the cluster indistinguishability theorem also 
  holds for some $G$-invariant percolation processes with $\pi\neq id$, like 
  the \emph{percolation process with scenery} introduced in \cite[Remark 3.4]{LS} (see also \cite[Proposition 6]{GL}).
\end{example}

\begin{example} \label{ex:unimodular} 
Let $\dG = (V,E)$ be a locally finite graph and let $G$ be a unimodular 
  closed group of automorphisms of $\dG$ acting transitively on $V$. Then the 
  proof of Theorem~\ref{thm:maintheorem1} extends to this case. Thus, if $\mu$ 
  is an insertion tolerant, $\calR$-ergodic and $\calR$-invariant probability 
  measure on $2^E$, any closed $\calR$-minimal subset $Z$ of $2^E$ is either 
  $Z = \{E\}$ or $\mu(Z)=0$. But Theorem 3.3 of \cite{LS} is also valid for any 
  $G$-invariant insertion-tolerant bond percolation process on $\dG$, and hence 
  Theorem~\ref{thm:maintheorem2} applies to all unimodular random graphs 
  obtained by this kind of percolation.
\end{example}

\section*{Acknowledgements} 

The Lemma~\ref{keylemma} and its proof were provided by the referee based on 
the first version of the paper. Section~\ref{Srandomsubgraphs} also benefited 
from their suggestions and comments. The authors were supported by Spanish 
Excellence Grant  MTM2013-46337-C2-2-P, Galician Grant GPC2015/006 and the 
European Regional Development Fund. Second author was also supported by the 
European Social Fund and Diputaci\'on General de Arag\'on (Grant E15 
Geometr\'ia).

\bigskip 

\noindent
\textit{E-mail addresses:} \par 
 \addvspace{\medskipamount}
 
{\footnotesize
\noindent
Fernando Alcalde Cuesta:~\texttt{fernando.alcalde@usc.es} \par  \addvspace{\smallskipamount}
\noindent
\'Alvaro Lozano Rojo:~\texttt{alvarolozano@unizar.es}  \par  
\noindent
Ant\'on C. V\'azquez Mart\'{\i}nez:~\texttt{anton.vazquez@yahoo.es}
}

\end{document}